\newtheorem{theo}{Theorem}[section]
\newtheorem{propo}[theo]{Proposition}
\newtheorem{rem}[theo]{Remark}
\newcommand\Mod{\operatorname{Mod}}
\newcommand\id{\operatorname{id}}
\newcommand\Set{\operatorname{\bf Set}}
\newcommand\Met{\operatorname{\bf Met}}
\newcommand\Ban{\operatorname{\bf Ban}}
\newcommand\Norm{\operatorname{\bf Norm}}
\newcommand\CAlg{\operatorname{\bf CAlg}}
\newcommand\BanAlg{\operatorname{\bf BanAlg}}
\newcommand\IBanAlg{\operatorname{\bf IBanAlg}}
\newcommand\Alg{\operatorname{\bf Alg}}
\newcommand\CCAlg{\operatorname{\bf CCAlg}}
\newcommand\CMet{\operatorname{\bf CMet}}
\newcommand\MonCMet{\operatorname{\bf MonCMet}}
\newcommand\reals{\mathbb {R}}
\newcommand\ca{\mathcal {A}}
\newcommand\ck{\mathcal {K}}
\newcommand\pa{\parallel}
 \newbox\noforkbox \newdimen\forklinewidth
\noforkbox\hbox{\lower 2pt\box1\lower
2pt\box0\relax}
\date{May 6, 2021}
\begin{document}
\title[Are Banach spaces monadic?]
{Are Banach spaces monadic?}
\author[J. Rosick\'{y}]
{J. Rosick\'{y}}
\thanks{Supported by the Grant Agency of the Czech Republic under the grant 
               19-00902S} 
\address{
\newline J. Rosick\'{y}\newline
Department of Mathematics and Statistics,\newline
Masaryk University, Faculty of Sciences,\newline
Kotl\'{a}\v{r}sk\'{a} 2, 611 37 Brno,\newline
Czech Republic}
\email{rosicky@math.muni.cz}

\begin{abstract}
We will show that Banach spaces are monadic over complete metric spaces via the unit ball functor. For the forgetful functor, one should take complete pointed metric spaces.
\end{abstract}
\maketitle

\section{Introduction}
We will present some results about monadicity of the category $\Ban$ of (complex) Banach spaces and linear maps of norm $\leq 1$. Of course, the question depends on a forgetful functor we choose. We cannot consider the underlying set functor $V_0:\Ban\to\Set$ because it does not preserve products. One should consider the unit ball functor $U_0:\Ban\to\Set$ whose left adjoint is $l_1:\Set\to\Ban$. There is well known that $U_0$ is not monadic and its monadic completion is the category of totally convex spaces (see \cite{PR}). However, using \cite{PR}, we show that the unit ball functor $U:\Ban\to\CMet$ is monadic where $\CMet$ is the category of complete metric spaces and nonexpanding maps. The forgetful functor $V:\Ban\to\CMet$ does not preserve products again.

The category $\CMet$ has many deficiencies and it is natural to replace it by the category $\CMet_\infty$ of generalized complete metric spaces by allowing distances to be $\infty$ while keeping all other requirements, as well as the type of morphisms (see, e.g., \cite{RT}). In the same way, we can generalize Banach spaces by allowing norms to be $\infty$. We will show that the forgetful functor $V_\infty:\Ban_\infty\to\CMet_\infty$ is now monadic. The reason is that $\CMet_\infty$ is symmetric monoidal closed and generalized Banach spaces coincide with monoids in $\CMet_\infty$ equipped with scalar multiplication. 

Another modification of $\CMet$ is the category $\CMet^\bullet$ of pointed complete metric spaces. Here, the forgetful functor $V^\bullet:\Ban\to\CMet^\bullet$ has a left adjoint (given by Lipschitz-free spaces, see, e.g., \cite{CDW}). We will show that $V^\bullet$ is monadic, which was suspected by T. Fritz in \cite{F}. Finally, we will touch the question of monadicity of the category $\CAlg$ of $C^\ast$-algebras over $\Ban$. 

We recall that a category $\ck$ is locally $\lambda$-presentable, where $\lambda$ is a regular cardinal, if it is cocomplete and has a set $\ca$ of $\lambda$-presentable objects
such that very object of $\ck$ is a $\lambda$-directed colimit of objects fro $\ca$.
Here, $\lambda$-directed colimits are colimits over $\lambda$-directed posets
and an object $A$ is $\lambda$-presentable if its hom-functor $\ck(A,-):\ck\to\Set$ preserves $\lambda$-directed colimits. A category is locally presentable if it is locally $\lambda$-presentable for some regular cardinal $\lambda$. All needed facts about locally presentable categories can be found in \cite{AR}. 

In what follows, forgetful functors on various categories of Banach spaces will be denoted
by $V$ with needed decorations and, similarly, unit ball functors will be denoted by $U$.

\noindent {\bf Acknowledgement.} We are grateful to the referee for valuable comments and suggestions.

\section{Generalized Banach spaces}
The category $\Met$ of metric spaces and nonexpanding maps is neither complete nor cocomplete, and the tensor product $X\otimes Y$, which puts the $+$-metric 
$$d\otimes d((x,y),(x',y'))=d(x,x')+d(y,y')
$$ 
on $X\times Y$, fails to make $\Met$ monoidal closed. (Note that $X\otimes Y$ must not be confused with the Cartesian product $X\times Y$ in $\Met$, which is given by the max-metric.) One therefore enlarges $\Met$ to the category $\Met_{\infty}$ of {\em generalized metric spaces}, by allowing distances to be $\infty$ while keeping all other requirements, as well as the type of morphisms. Then $\Met_{\infty}$ is complete and cocomplete and monoidal closed, with the internal hom providing the hom-set  $\Met_{\infty}(X,Y)$ with the sup-metric $d(f,g)=\sup\{d(fx,gx)\;|\;x\in X\}$. Moreover, $\Met_\infty$ is locally $\aleph_1$-presentable (see \cite{LR} 4.5(3)). The category $\CMet_\infty$ of complete generalized metric spaces is locally $\aleph_1$-presentable too (see \cite{AR1} 2.3(2)).  
 
The category $\Ban$ of (complex) Banach spaces and linear maps of norm $\leq 1$ is locally $\aleph_1$-presentable (see \cite{AR} 1.48). We will also consider the category $\Ban_\infty$ of \textit{generalized Banach spaces}, by allowing norms to be $\infty$ while keeping all other requirements, as well as the type of morphisms. Similarly, $\Norm_\infty$ will be the category generalized normed spaces and linear maps of norm $\leq 1$.  

\begin{propo}\label{lp}
The category $\Ban_\infty$ is locally $\aleph_1$-presentable.
\end{propo}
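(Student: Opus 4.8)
The plan is to exhibit $\Ban_\infty$ as a locally $\aleph_1$-presentable category by identifying a suitable generating set of $\aleph_1$-presentable objects and verifying the two defining conditions: cocompleteness and the $\aleph_1$-directed colimit density of that set. Since $\Ban_\infty$ enlarges $\Ban$ only by permitting infinite norms, I expect the structure theory to mirror the classical fact that $\Ban$ is locally $\aleph_1$-presentable (cited as \cite{AR} 1.48), and the main conceptual point will be that allowing norms to be $\infty$ does not destroy either completeness properties or the accessibility rank, which remains $\aleph_1$ because the norm axioms and the completeness requirement are all expressible by countably many conditions involving countable (Cauchy) data.

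First I would establish that $\Ban_\infty$ is cocomplete. The cleanest route is to observe that a generalized Banach space decomposes into a family of ordinary Banach spaces: define an equivalence relation on a space $X$ by declaring $x \sim y$ when $\|x-y\| < \infty$. The class of $0$ is an ordinary Banach space, and $X$ is a coproduct, in $\Ban_\infty$, of genuine Banach spaces indexed by the cosets. This is exactly analogous to the decomposition of a generalized metric space into its finite-distance components, which underlies the local presentability of $\CMet_\infty$ and $\Met_\infty$ recorded just above. Using this decomposition together with the cocompleteness of $\Ban$, one assembles colimits in $\Ban_\infty$; limits likewise reduce to the $\Ban$ case after matching up finite-distance components.

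Next I would pin down the $\aleph_1$-presentable objects and verify density. An object is $\aleph_1$-presentable precisely when its hom-functor preserves $\aleph_1$-directed colimits, and here these are the separable generalized Banach spaces, as in the ordinary Banach case: every generalized Banach space is the $\aleph_1$-directed colimit (union) of its separable generalized subspaces, and separability is an $\aleph_1$-condition because Cauchy sequences are indexed by $\omega$. I would check that there is, up to isomorphism, only a set of separable generalized Banach spaces, so they form a legitimate generating set $\ca$. Combining cocompleteness with the fact that every object is an $\aleph_1$-directed colimit of members of $\ca$ yields local $\aleph_1$-presentability.

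The step I expect to be the main obstacle is the careful verification that $\aleph_1$ (rather than some larger cardinal) is the correct rank once infinite norms are allowed: one must confirm that the coproduct decomposition into finite-distance components interacts correctly with $\aleph_1$-directed colimits, so that presentability is not inflated by the possibly large index set of cosets. The safest way to handle this cleanly is to invoke the local presentability of $\CMet_\infty$ (already cited above as \cite{AR1} 2.3(2)) and transfer it along the forgetful functor $V_\infty : \Ban_\infty \to \CMet_\infty$, showing this functor is accessible and that $\Ban_\infty$ arises as a category of algebras or as an orthogonality/injectivity class inside an accessible functor category over $\CMet_\infty$; such classes in a locally $\aleph_1$-presentable category are again locally $\aleph_1$-presentable, which gives the result with the rank under control.
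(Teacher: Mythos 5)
Your primary route breaks down at its very first step. The claimed decomposition of a generalized Banach space $X$ as a coproduct, in $\Ban_\infty$, of ordinary Banach spaces indexed by the cosets of the finite-norm subspace $X_0=\{x : \pa x\pa <\infty\}$ is false: a coset $x+X_0$ with $x\notin X_0$ does not contain $0$, so it is not a linear subspace and not an object of $\Ban_\infty$ at all. The analogy with $\Met_\infty$ fails precisely because the linear structure ties the ``components'' together: addition carries a pair of cosets into a third coset, whereas the finite-distance components of a generalized metric space do not interact. Indeed, no decomposition of this kind can exist, since coproducts of ordinary Banach spaces in $\Ban_\infty$ are their $\ell_1$-sums, which are again ordinary Banach spaces; hence a space such as $\mathbb{C}_\infty$ (all non-zero vectors of norm $\infty$, see Remark \ref{directed}(2)) is not a coproduct of genuine Banach spaces. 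The same confusion undermines your density claim: distinct cosets of $X_0$ lie at distance $\infty$, so a metrically separable $X$ has at most countably many cosets, and since a non-zero complex vector space $X/X_0$ has cardinality at least $2^{\aleph_0}$, separability forces $X=X_0$. Your proposed generating set therefore consists exactly of the separable ordinary Banach spaces, and $\aleph_1$-directed colimits of these stay inside $\Ban$ (infima of finite norms are finite), so they can never produce $\mathbb{C}_\infty$. The correct generators would have to be spaces generated by countably many elements of possibly infinite norm, which is not what separability gives.

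Your fallback---transferring local presentability along $V_\infty:\Ban_\infty\to\CMet_\infty$---gestures at something workable but supplies no argument: you never exhibit $\Ban_\infty$ as a category of algebras or as an injectivity class over $\CMet_\infty$, and in the paper's own development this route is delicate, because the monadicity of $V_\infty$ (Theorem \ref{monad}) is proved \emph{using} the present proposition (local presentability is what yields the left adjoint $F_\infty$ via \cite{AR} 1.66), so invoking it here would be circular unless one independently carries out the endofunctor-algebra presentation that appears in the second half of that proof. The paper's actual argument is different and much more elementary: it axiomatizes $\Norm_\infty$ as the category of models of a universal Horn theory in $L_{\omega_1,\omega}$, with unary predicates $R_r$ (for $0\leq r\in\reals$) interpreted as $\pa x\pa\leq r$, vector space axioms, Horn axioms making the $R_r$ behave like closed balls, and one countable-conjunction axiom ensuring the predicates come from an actual norm; by \cite{AR} 5.30 this makes $\Norm_\infty$ locally $\aleph_1$-presentable, and then $\Ban_\infty$ is a reflective subcategory of $\Norm_\infty$ (the reflection being completion) closed under $\aleph_1$-directed colimits, hence locally $\aleph_1$-presentable by \cite{AR} 1.39. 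If you want to repair your proof, either adopt this syntactic presentation or work out in full the presentation of $\Ban_\infty$ by operations and axioms over $\CMet_\infty$; the coset-decomposition strategy cannot be fixed.
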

\begin{proof}
Consider the single-sorted signature with unary relation symbols $R_r$ for each $0\leq r\in\reals$, constant $0$, binary operation $+$ and unary operations $c\cdot-$ for $c\in\Bbb C$. Let $T$ consist of complex vector space axioms and axioms
$$
(\forall x)(R_0(x)\leftrightarrow x=0)
$$
for all $r\leq s$
$$
(\forall x)(R_r(x)\rightarrow R_s(x))
$$
for all $r,s$
$$
(\forall x,y)(R_r(x)\wedge R_s(y)\rightarrow R_{r+s}(x+y))
$$
for all $r$
$$
(\forall x) R_r(x)\leftrightarrow R_{|c|r}(c\cdot x)
$$
for $r_0\geq r_1\geq\dots r_n\geq\dots$ with $r=\lim r_n$
$$
(\forall x) (\bigwedge_n R_{r_n}(x)\rightarrow R_r(x))
$$
Since $T$ is a universal Horn theory in $L_{\omega_1,\omega}$, the category $\Mod(T)$ of $T$-models and homomorphisms is locally $\aleph_1$-presentable (see \cite{AR} 5.30). If we interpret $R_r(a)$ as $\pa a\pa\leq r$, $\Mod(T)$ is isomorphic to the category $\Norm_\infty$. 
 
$\Ban_\infty$ is a reflective subcategory of $\Norm_\infty$ closed under $\aleph_1$-directed colimits (reflection is given by the completions). Hence $\Ban_\infty$ is locally $\aleph_1$-presentable (see \cite{AR} 1.39).
\end{proof}

\begin{theo}\label{monad}
The forgetful functor $V_\infty:\Ban_\infty\to\CMet_\infty$ is monadic.
\end{theo}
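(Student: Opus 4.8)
The plan is to verify the three hypotheses of Beck's monadicity theorem: $V_\infty$ is monadic as soon as (i) it has a left adjoint, (ii) it is conservative, and (iii) it creates coequalizers of those parallel pairs that $V_\infty$ sends to a split coequalizer. Condition (ii) is immediate, since if $V_\infty f$ is a bijective isometry then the set-theoretic inverse of the linear map $f$ is again linear and isometric, hence lies in $\Ban_\infty$, so that $f$ is an isomorphism.

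For (i) I would invoke the adjoint functor theorem for locally presentable categories (see \cite{AR}): both $\Ban_\infty$, by Proposition \ref{lp}, and $\CMet_\infty$ are locally $\aleph_1$-presentable, so it suffices that $V_\infty$ preserve limits and $\aleph_1$-directed colimits. The decisive point---and the reason for admitting infinite norms---is that $V_\infty$ now preserves products: the product of a family in $\Ban_\infty$ is carried by the set of \emph{all} tuples under the supremum norm (allowed to be $\infty$), whose underlying generalized metric space is exactly the product in $\CMet_\infty$; it is precisely this coincidence that fails for ordinary $\Ban$ and obstructs monadicity of $V$. Equalizers and $\aleph_1$-directed colimits are computed on underlying metric spaces, and are preserved as well.

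The substance lies in (iii), where I would use the symmetric monoidal closed structure on $\CMet_\infty$ given by the $+$-metric tensor $\otimes$. The point is that a generalized Banach space is a commutative monoid $(M,+,0)$ in $(\CMet_\infty,\otimes)$ equipped with the unary scalar operations $c\cdot(-)\colon M\to M$ for $|c|\le 1$, subject to the vector-space axioms and the norm relations $R_r$ of Proposition \ref{lp}; the additive inverse is recovered as multiplication by $-1$, and scalars of modulus $>1$ as finite sums via $+$. Crucially, each of these operations is a genuine morphism of $\CMet_\infty$: multiplication by a scalar of modulus $\le 1$ is nonexpanding, and $+\colon M\otimes M\to M$ is nonexpanding for the $+$-metric. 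Given $f,g\colon A\rightrightarrows B$ in $\Ban_\infty$ whose image admits a split coequalizer $q\colon V_\infty B\to Q$ with splittings $s,t$, I would transport the structure along $q$ by the standard Beck formulas, setting $\bar\alpha=q\,\alpha\,s$ for each unary operation $\alpha$ and $\bar{+}=q\circ{+}\circ(s\otimes s)$; since $\otimes$ is cocontinuous, $q\otimes q$ is again an absolute coequalizer, which is what makes $\bar{+}$ well defined. Absoluteness of the split coequalizer then reduces the equational identities, and the uniqueness of the lift, to diagram chases, while completeness of $Q$ is automatic as $Q$ already lies in $\CMet_\infty$. I note also that translation invariance $d_Q(x,y)=\|x-y\|$ of the transported norm $\|x\|:=d_Q(x,0)$ follows formally from $+$ being nonexpanding together with the existence of inverses.

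The step I expect to require the most care is confirming that the homogeneity relation $\|c\cdot x\|=|c|\,\|x\|$---the content of the relations $R_r$---holds for the transported scalar action. This does \emph{not} follow from the equational axioms and the naive composite formula alone: since $q$ and $s$ are merely nonexpanding rather than isometric, $\bar{\alpha}=q\,\alpha\,s$ for $\alpha=c\cdot(-)$ only yields that $\bar\alpha$ is nonexpanding, losing the scaling factor $|c|$. Here one must transport the relations $R_r$ themselves along the split coequalizer, exploiting that rescaling the metric is a functorial endooperation on $\CMet_\infty$ that commutes with absolute coequalizers, so that the $|c|$-scaling of $c\cdot(-)$ descends to $Q$. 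Once this homogeneity is secured---separating $\Ban_\infty$ from a bare category of monoids with scalars---Beck's theorem delivers the monadicity of $V_\infty$.
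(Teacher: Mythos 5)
Your overall architecture is the same as the paper's: obtain the left adjoint from local $\aleph_1$-presentability, identify generalized Banach spaces with monoids in $(\CMet_\infty,\otimes)$ carrying nonexpanding unary operations $c\cdot(-)$ for $|c|\le 1$, and verify Beck's condition by transporting this structure along split coequalizers (the paper packages the transport via Porst's monoid-monadicity and the endofunctor $H(X)=(X\otimes X)\amalg I\amalg\coprod_c X$, but the content is identical). You are also right that the whole weight falls on the one non-equational axiom, homogeneity --- the paper disposes of it in a single sentence about ``the appropriate axioms'' being closed under $V_\infty$-absolute colimits. But your treatment of precisely that step has a genuine gap, and the difficulty is not where you locate it. First, the section $s$ is automatically isometric: $d_Q(x,y)=d_Q(qsx,qsy)\le d_B(sx,sy)\le d_Q(x,y)$, since a nonexpanding section of a nonexpanding retraction preserves distances. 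Hence the estimate you are worried about, $d_Q(c\cdot_Q x,c\cdot_Q y)\le |c|\,d_Q(x,y)$, follows at once from the naive formula $c\cdot_Q=q\circ(c\cdot_B)\circ s$ and homogeneity in $B$; and this \emph{upper} bound is also all that your rescaling mechanism can produce. Writing $\lambda X$ for $X$ with its metric multiplied by $\lambda$, homogeneity in $B$ says that $c\cdot_B\colon |c|B\to B$ is distance-preserving; descending it along the rescaled split coequalizer yields a morphism $|c|Q\to Q$, i.e.\ a nonexpanding map --- the upper bound again. Distance preservation is not a property that survives postcomposition with the genuinely contracting map $q$, so functoriality of rescaling alone cannot give the reverse inequality $d_Q(c\cdot x,c\cdot y)\ge |c|\,d_Q(x,y)$. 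That reverse inequality is the actual content: it is not a consequence of the equations plus nonexpansiveness, as one sees from $\mathbb{C}$ with $d(x,y)=\lceil |x-y|\rceil$ or with $d(x,y)=\sqrt{|x-y|}$, which satisfy every vector-space equation with all operations nonexpanding, yet are not generalized Banach spaces.

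There are two honest ways to close the gap, either of which completes your argument. (a) Descend the inverse scalars as well: $c^{-1}\cdot_B$ is a morphism $B\to |c|B$ of $\CMet_\infty$ (it is even distance-preserving), natural with respect to morphisms of $\Ban_\infty$; since $q$ is a coequalizer in $\CMet_\infty$, it induces a nonexpanding map $\delta\colon Q\to |c|Q$ with $\delta q=(|c|q)\circ c^{-1}\cdot_B$, and the created identities give $\delta\circ (c\cdot_Q)=\id$ and $(c\cdot_Q)\circ\delta=\id$. Thus $c\cdot_Q\colon |c|Q\to Q$ is an isomorphism in $\CMet_\infty$, which is exactly homogeneity. (b) Argue algebraically from what has already descended: put $N(z)=d_Q(z,0)$; subadditivity of $N$ (from $+_Q$ nonexpanding) together with the upper bound gives $N(mz)=mN(z)$ for positive integers $m$, because $N(z)=N(\tfrac1m(mz))\le\tfrac1m N(mz)\le N(z)$; hence $N(cz)=cN(z)$ for rational $c\in(0,1]$, hence for all real $c$ by approximating from below with rationals $r\le c$ via $rz=\tfrac rc(cz)$, and finally for all complex $c$ since unimodular scalars act by isometries. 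With either repair your proposal is correct and, modulo presentation, coincides with the paper's proof; as written, however, its key step is not established.
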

\begin{proof}
Since $V_\infty$ preserves limits and $\aleph_1$-directed colimits, it has a left adjoint $F_\infty$ (see \cite{AR} 1.66). Let $T_\infty=V_\infty F_\infty$ be the induced monad. Given a generalized Banach space $A$, the operation $+:V_\infty A\otimes V_\infty A\to V_\infty A$ is nonexpanding because 
\begin{align*}
d((x,y),(x',y')) &= d(x,x') + d(y,y') = \pa x-x'\pa + \pa y-y'\pa\geq \pa x-x' + y-y'\pa\\
&=d(x+y,x'+y').
\end{align*}
Hence $V_\infty A$ is a monoid in $\CMet_\infty$, i.e., a complete metric space $M$ equipped with operations $+:M\otimes M\to M$ and $0:I\to M$, where $I$ is the one-point metric space, satisfying the monoid axioms. Hence $V=V_2V_1$ where $V_1:\Ban_\infty\to\MonCMet_\infty$ and $V_2:\MonCMet_\infty\to\CMet_\infty$ are forgetful functors. Following \cite{P}, the category $\MonCMet_\infty$ of monoids in $\CMet_\infty$ is monadic over $\CMet_\infty$.

In a monoid $M$ over $\CMet_\infty$, we define $\pa x\pa=d(x,0)$. We have
$$
\pa x+y\pa = d(x+y,0) \leq d((x,y),(0,0)) = d(x,0) + d(y,0) =\pa x\pa + \pa y\pa.
$$
Hence generalized Banach spaces coincide with monoids in $\CMet_\infty$ equipped with unary operations $c\cdot -$ satisfying the appropriate axioms. These operations are nonexpanding iff $|c|\leq 1$. Since $c\cdot-$ is inverse for $c^{-1}\cdot-$, it is easy to see that the category $\Ban_\infty$ is equivalent to the category of monoids in $\CMet_\infty$ equipped with nonepanding operation $c\cdot-$ for $|c|\leq 1$ satisfying the apropriate axioms.

Consider the functor $H:\CMet_\infty\to\CMet_\infty$ sending $X$ to 
$(X\otimes X)\amalg I\amalg\coprod_c X$ where $c\in \Bbb C$, $|c|\leq 1$. Then $H$-algebras are generalized complete metric spaces equipped with operations $+$, $0$ and $c\cdot-$ for $|c|\leq 1$. Since $H$ preserves directed colimits, \cite{AR} Remark 2.75 implies that the category $H$-$\Alg$ of $H$-algebras is locally presentable. Following \cite{AP} 5.6, the forgetful functor to $H$-$\Alg\to\CMet_\infty$ creates all colimits that $H$
preserves. In particular, it creates absolute coequalizers and, following Beck's theorem,
it is monadic (see \cite{ML}). Like in \cite{P}, the category of $H$-algebras satifying the appropriate axioms is closed in $H$-$\Alg$ under directed and $V_\infty$-absolute colimits. Thus $\Ban_\infty$ is monadic.
\end{proof}

\begin{rem}\label{directed}
{
\em
(1) The functor $V_\infty$ even preserves directed colimits. Indeed, if $x=\lim_n x_n$ and $y=\lim_n y_n$ then $x+y=\lim_n (x_n+y_n)$. Hence the monad $T_\infty$ preserves directed colimits. 

(2) The value $F_\infty(1)$ of the left adjoint $F_\infty:\CMet_\infty\to\Ban_\infty$ is the generalized Banach space $\Bbb C_\infty$ of complex numbers where all non-zero elements have norm $\infty$.
}
\end{rem}

\section{Complete pointed metric spaces}
A \textit{pointed generalized metric space} $(X,0)$ is a generalized metric space $X$ with a choosen element $0\in X$. Morphisms of pointed generalized metric spaces are nonexpanding maps preserving $0$. Let $\Met^\bullet_\infty$ be the category of pointed generalized metric spaces and $\CMet^\bullet_\infty$ the category of pointed generalized complete metric spaces. The categories $\Met^\bullet_\infty$ and $\CMet^\bullet_\infty$
are locally $\aleph_1$-presentable. They are also symmetric monoidal closed where the tensor product is the \textit{smash product} $X\wedge Y$. Recall that $X\wedge Y$ is the pushout
$$
		\xymatrix@=2pc{
			X\otimes 1\amalg 1\otimes Y \ar [r]^{}\ar[d]_{} & X\otimes Y \ar[d]^{}\\
			1  \ar [r]_{}& X\wedge Y
		}
		$$ 
where $1=\{0\}$ is the zero object in $\Met^\bullet_\infty$. The internal hom provides the hom-set $\Met^\bullet_\infty(X,Y)$ with the sup-metric.	

The category $\Met^\bullet$ of pointed metric spaces is a coreflective full subcategory of $\Met_\infty^\bullet$ where the coreflector assigns to a pointed generalized metric space $A$ its subspace consisting of all elements $a$ such that $d(0,a)<\infty$. Like in the proof of \ref{lp}, $\Met^\bullet$ is locally $\aleph_1$-presentable. Similarly, the category $\CMet^\bullet$ of pointed complete metric spaces is locally $\aleph_1$-presentable. 	

\begin{theo}\label{monad1}
The forgetful functor $V^\bullet:\Ban\to\CMet^\bullet$ is monadic.
\end{theo}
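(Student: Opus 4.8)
The plan is to imitate the proof of Theorem \ref{monad}, working over $\CMet^\bullet$ in place of $\CMet_\infty$ and concluding via Beck's theorem. First I would produce the left adjoint $F^\bullet$. Both $\Ban$ and $\CMet^\bullet$ are locally $\aleph_1$-presentable, so by \cite{AR} 1.66 it suffices to check that $V^\bullet$ preserves limits and $\aleph_1$-directed colimits; the latter is the pointed analogue of Remark \ref{directed}(1). The essential point---the one that fails for $V:\Ban\to\CMet$---is preservation of products. Here the basepoint does the work: since $\CMet^\bullet$ is coreflective in the category of pointed generalized complete metric spaces, with the coreflector cutting down to the points at finite distance from $0$, the product in $\CMet^\bullet$ of a family $(X_i,0_i)$ is the bounded product $\{(x_i)\mid \sup_i d(0_i,x_i)<\infty\}$ with the sup-metric, which is exactly the underlying pointed metric space of the $\ell^\infty$-product in $\Ban$. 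Thus $V^\bullet$ preserves products, equalizers are preserved by inspection, and so $V^\bullet$ preserves limits and $F^\bullet$ exists (it is the Lipschitz-free functor of \cite{CDW}).

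Next I would describe $\Ban$ algebraically over $\CMet^\bullet$. For a Banach space the addition $+:X\otimes X\to X$ is nonexpanding for the $+$-metric tensor $\otimes$ on $\CMet^\bullet$ by the computation in Theorem \ref{monad}, and now the basepoint $0$ serves as its unit, so no separate nullary operation is needed. Recovering the norm by $\pa x\pa=d(x,0)$---which is automatically finite in $\CMet^\bullet$, so that one lands in $\Ban$ rather than $\Ban_\infty$---I would show that $\Ban$ is the category of pointed complete metric spaces equipped with a nonexpanding associative commutative addition with unit $0$ and with inverses, together with nonexpanding scalar multiplications $c\cdot-$ for $|c|\le 1$ subject to the module axioms (the operations $c\cdot-$ for $|c|>1$ being recovered as inverses of $c^{-1}\cdot-$, exactly as in Theorem \ref{monad}).

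I would then run the $H$-algebra argument as before. Put $H:\CMet^\bullet\to\CMet^\bullet$, $H(X)=(X\otimes X)\amalg\coprod_{|c|\le 1}X$; since $\otimes$ and coproducts preserve directed colimits, so does $H$, whence $H$-$\Alg$ is locally presentable (\cite{AR} 2.75) and, by \cite{AP} 5.6 together with Beck's theorem (\cite{ML}), monadic over $\CMet^\bullet$. Finally, as in \cite{P}, the full subcategory of $H$-algebras satisfying the abelian-group and module axioms is $\Ban$ and is closed in $H$-$\Alg$ under directed and $V^\bullet$-absolute colimits, so $V^\bullet$ is monadic.

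The step I expect to be the main obstacle is the product computation of the first paragraph: one must verify that the categorical product in $\CMet^\bullet$ really is the bounded product and that this matches the $\Ban$-product on underlying pointed metric spaces, since this is precisely where the unpointed functor $V:\Ban\to\CMet$ breaks down. A secondary point needing care is that the axiom-defined subcategory is closed under $V^\bullet$-absolute colimits; here the completeness and finiteness built into $\CMet^\bullet$ must be reconciled with the colimit computations, but this parallels the corresponding verification in Theorem \ref{monad}.
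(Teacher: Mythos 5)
Your proposal follows a genuinely different route from the paper. The paper never redoes the algebraic description of Banach spaces over $\CMet^\bullet$: after producing $F^\bullet$ exactly as you do, it factors $V=WV^\bullet$ through the basepoint-forgetting functor $W:\CMet^\bullet\to\CMet$, observes that a $V^\bullet$-split coequalizer becomes a $V_\infty$-split coequalizer, invokes the already-established Theorem \ref{monad} to create the coequalizer in $\Ban_\infty$ (which lands in $\Ban$ since all distances in $WC$ are finite), checks that the basepoints and maps match, and concludes by Beck's theorem. Your first paragraph is correct and even supplies a detail the paper leaves implicit: the product in $\CMet^\bullet$ is the bounded product with the sup-metric, which is exactly the underlying pointed metric space of the $\ell^\infty$-product, so $V^\bullet$ preserves products precisely where $V$ does not. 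Your use of the plain $+$-metric tensor with basepoint $(0,0)$, rather than the smash product, is also legitimate, since the $H$-algebra formalism of \cite{AP} needs only an endofunctor and not a monoidal structure; this correctly sidesteps the obstruction recorded in the paper's remark that Banach spaces are not monoids for $\wedge$.

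However, there is a genuine gap in your second paragraph. The claim that $\Ban$ is the category of pointed complete metric spaces with a nonexpanding abelian group structure and nonexpanding operations $c\cdot-$ for $|c|\le 1$ \emph{subject to the module axioms} is false: those hypotheses only yield $|c|\,d(x,y)\le d(cx,cy)\le d(x,y)$, and homogeneity $\pa cx\pa=|c|\,\pa x\pa$ is not derivable. Concretely, take $\mathbb{C}$ with the snowflake metric $d(x,y)=|x-y|^{1/2}$, based at $0$, with the usual vector space operations: this space is complete, addition is nonexpanding for the $+$-metric, each $c\cdot-$ with $|c|\le 1$ is nonexpanding, and every equational axiom holds, yet $d(cx,0)=|c|^{1/2}d(x,0)$, so it is not the pointed metric space of any Banach space. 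Thus the full subcategory of $H$-$\Alg$ cut out by your axioms strictly contains $\Ban$, and your argument, as written, proves monadicity of that larger category rather than of $\Ban$. To repair it you must add the metric condition $d(cx,cy)=|c|\,d(x,y)$, which is not an equation between $H$-operations, and then verify that it is stable under directed colimits and under $V^\bullet$-split coequalizers; the latter is not obvious, because the section $s$ in a split coequalizer need not be isometric. (The paper's own proof of Theorem \ref{monad} hides the same issue behind the phrase ``appropriate axioms,'' but its proof of Theorem \ref{monad1} deliberately avoids reopening it by reducing everything to Theorem \ref{monad}; your route reopens it and then leaves it unresolved.)
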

\begin{proof}
Like in the proof of \ref{monad}, the functor $V^\bullet$ preserves limits and $\aleph_1$-directed colimits, thus it has a left adjoint $F^\bullet$. Let $T^\bullet=V^\bullet F^\bullet$ be the induced monad and $W:\CMet^\bullet\to\CMet$ the forgetful functor. Then $WV^\bullet=V$ is the forgetful functor $\Ban\to\CMet$ which is the domain-codomain restriction of $V_\infty$.

Consider a pair $f,g:A\to B$ of morphisms in $\Ban$ such that $V^\bullet f,V^\bullet g$ has a split coequalizer in $\CMet^\bullet$ given by $h:V^\bullet B\to C$, $s:C\to V^\bullet B$ and $t:V^\bullet B\to V^\bullet A$. Then $V_\infty f,V_\infty g:V_\infty A\to V_\infty B$ has a split coequalizer in $\CMet$ given by $Wh:V_\infty B\to WC$, $Ws$ and $Wt$. Following \ref{monad} and Beck's theorem, there is a unique $\bar{C}$ and a unique $\bar{h}:B\to\bar{C}$ such that $V_\infty\bar{C}=WC$ and $V_\infty\bar{h}=Wh$ and, moreover, $\bar{h}$ is a coequalizer of $f$ and $g$. We have $WV^\bullet\bar{C}=V_\infty\bar{C}=WC$. This means that $V^\bullet W\bar{C}$ and $WC$ are the same metric spaces. Since $0$ in $V_\infty\bar{C}$ is $0\in \bar{C}$ and $0$ in $C$ is $0\in \bar{C}$, $V^\bullet\bar{C}=C$. Since $W$ is faithful and $WV^\bullet\bar{h}=V_\infty\bar{h}=Wh$, we have $V^\bullet\bar{h}=h$. Thus $V^\bullet$ creates the coequalizer of $V^\bullet f$ and $V^\bullet g$. Following Beck's theorem, $V^\bullet$ is monadic. 
\end{proof}

\begin{rem}
{
\em
(1) Banach spaces are not monoids in $\CMet^\bullet$ because $+$ is not a morphism $V^\bullet A\wedge V^\bullet A\to V^\bullet A$.

(2) The left adjoint $F^\bullet$ sends a pointed complete metric space $X$ to its Lipschitz-free space (see, e.g., \cite{CDW}), they are also called Arens-Eells spaces.

(3) Like in \ref{directed}, $T^\bullet$ preserves directed colimits.

(4) In the same way as in \ref{monad1} we show that the forgetful functor $V_\infty^\bullet:\Ban_\infty\to\CMet_\infty^\bullet$ is monadic.
}
\end{rem}

\section{Banach spaces}
The forgetful functor $V:\Ban\to\CMet$ does not preserve products and we have to take the unit ball functor $U:\Ban\to\CMet$. Recall that the unit ball functor $U_0:\Ban\to\Set$ is not monadic and its monadic completion is the category of totally convex spaces (see \cite{PR}). These are algebras with operations indexed by sequences $(c_i)^\infty_{i=0}$ of complex numbers satisfying $\sum^\infty_{i=0} |c_i|\leq 1$. These operations are denoted by $\sum^\infty_{i=0} c_ix_i$ and are called totally convex operations. They satisfy some equations (see \cite{PR}, or \cite{AR} 1.48). Over $\CMet$, it suffices to take only finitary totally convex operations indexed by sequences $c_1,\dots,c_n$ such that $\sum^n_{i=0} |c_i|\leq 1$. Their algebras are called \textit{finitely totally convex spaces}.

A totally convex space $A$ is \textit{separated} if for $a_1,a_2\in A$ and $c\in\Bbb C$, $0<|c|<1$,
$ca_1=ca_2$ implies $a_1=a_2$ (see \cite{PR1} 11.2). 
\begin{theo}\label{main}
The unit ball functor $U:\Ban\to\CMet$ is monadic.
\end{theo}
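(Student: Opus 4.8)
The plan is to verify the hypotheses of Beck's monadicity theorem (\cite{ML}), following the scheme of Theorems \ref{monad} and \ref{monad1} but now organized around totally convex structure. Since $U$ preserves limits --- crucially products, which is exactly why one passes from $V$ to the unit ball functor, the unit ball of a product with the sup-norm being the max-metric product of the unit balls --- and preserves $\aleph_1$-directed colimits, it has a left adjoint $F$ by \cite{AR} 1.66. Write $T=UF$ for the induced monad and $K:\Ban\to\CMet^T$ for the comparison functor; the goal is to show that $K$ is an equivalence.

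The first task is to identify the algebras. On the unit ball $UA$ of a Banach space the finitary totally convex operations $\sum_{i=1}^n c_ix_i$, indexed by $c_1,\dots,c_n$ with $\sum_{i=1}^n|c_i|\le 1$, take values in $UA$ and are nonexpanding for the weighted metric $\sum_{i=1}^n|c_i|\,d(x_i,x_i')$ on the $n$-fold tensor, by the same estimate as in the proof of \ref{monad}. By \cite{PR} these operations together with their equations present exactly the totally convex structure, and over $\CMet$ the finitary ones suffice. Thus $K$ realizes each Banach space as a finitely totally convex complete metric space, namely its unit ball, and the remaining task is to show conversely that every $T$-algebra arises in this way.

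By \cite{PR1} 11.2, Banach spaces correspond to the totally convex spaces that are separated and complete. Metric completeness supplies the completeness condition, so $K$ will be an equivalence as soon as one shows that every $T$-algebra --- every finitely totally convex complete metric space --- is automatically separated, and then reconstructs from it a Banach space whose unit ball, with its metric, is the given algebra. The reconstruction is the standard one: the Banach space is assembled from scaled copies of the unit ball, with norm read off from the distance to $0$, while the vector space operations and the norm axioms follow from the totally convex equations; functoriality and the identity $U\bar C=C$ on the nose are then routine.

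The main obstacle is the separation step. Over $\Set$ it genuinely fails --- this is precisely why $U_0:\Ban\to\Set$ is not monadic --- so the argument must use the metric in an essential way: for $0<|c|<1$ the scaling $c\cdot-$ is nonexpanding with factor $|c|$, and one must promote this to the homothety $d(c\cdot a_1,c\cdot a_2)=|c|\,d(a_1,a_2)$, which forces $c\cdot a_1=c\cdot a_2$ to imply $d(a_1,a_2)=0$, i.e.\ $a_1=a_2$. Establishing this homothety --- equivalently, that the complete metric together with the totally convex axioms leaves no room for the non-separated algebras available over $\Set$ --- is the crux on which monadicity of $U$ turns. Once separation is in hand, $K$ is an equivalence and $U$ is monadic.
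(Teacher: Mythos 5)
Your outline reproduces the paper's skeleton --- left adjoint from preservation of limits and $\aleph_1$-directed colimits, $T$-algebras carrying totally convex structure, separation as a key step --- but at both places where an actual argument is required you stop, declaring the step either ``the crux'' or ``routine'', and in both cases what you defer is the real content. First, separation. You propose to establish the homothety $d(c\cdot a_1,c\cdot a_2)=|c|\,d(a_1,a_2)$. But the $T$-algebra structure only yields one inequality: since $h$ is nonexpanding and $\eta_A$ is isometric, $d(ca_1,ca_2)=d(h(c\eta_A a_1),h(c\eta_A a_2))\le |c|\,d(a_1,a_2)$. The reverse inequality would require scaling up by $|c|^{-1}>1$, which is not an operation available in the algebra, and it essentially amounts to already knowing that $A$ is the unit ball of a Banach space --- the very thing being proved. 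You offer no mechanism for it. The paper's separation argument needs no homothety at all: $h$, being nonexpanding, is continuous, and $\eta_A$ embeds $A$ isometrically into the Banach unit ball $UFA$, so for real $c_n\to 1$ one gets $a_i=h\eta_A(a_i)=h(\lim_n c_n\eta_A(a_i))$; then $ca_1=ca_2$ for $0<|c|<1$ forces $a_1=\lim_n h(c_n\eta_A(a_1))=\lim_n h(c_n\eta_A(a_2))=a_2$.

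Second, the reconstruction, which you call ``standard'' and ``routine''. This is where the paper does substantial work: it invokes the reflection $\sigma_A:A\to UA^\ast$ of totally convex spaces into unit balls of Banach spaces (\cite{PR} 7.7), uses separatedness to conclude that $\sigma_A$ is an isometry (\cite{PR1} 11.3), and uses \cite{PR} 7.3 to see that the image of $\sigma_A$ contains the \emph{open} unit ball of $A^\ast$; the remaining point --- that the image is the whole closed unit ball --- is exactly where metric completeness of $A$ enters, via a Cauchy-sequence argument: writing $a=\lim_n s_n a$ with $s_n=\sum_{k=1}^n 2^{-k}$, the preimages $b_n=\sigma_A^{-1}(s_n a)$ satisfy $d(b_m,b_n)\le\frac{s_n-s_m}{s_n}$ because $h$ is nonexpanding, hence converge to some $b$ with $\sigma_A(b)=a$. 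Your sentence ``metric completeness supplies the completeness condition'' conceals precisely this argument. A further unproved assertion is your identification of the $T$-algebras with finitely totally convex complete metric spaces: the paper never determines an equational presentation of $T$; it only shows that each $T$-algebra $(A,h)$ carries a totally convex structure defined through $h$, which is all that is needed. As it stands, your proposal is a plan whose two load-bearing steps are missing, and the route you suggest for the first one (the homothety) would not go through.
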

\begin{proof}
Like in the proof of \ref{monad}, the functor $U$ preserves limits and $\aleph_1$-directed colimits, thus it has a left adjoint $F$. Let $T=UF$ be the corresponding monad. In the same way as in \cite{PR} 1.3, we prove that the induced functor $\Ban\to\Alg(T)$ is fully faithful. We have to prove that it is essentially surjective.

Let $(A,h)$ be a $T$-algebra. Since $h\eta_A=\id_A$, $\eta_A$ isometrically embeds $A$ to the unit ball $UFA$. Unit balls of Banach spaces are totally convex and totally convex operations $(UB)^\infty\to UB$ have norm $\leq 1$. They are also preserved by morphisms in $\Ban$. We define totally convex operations on $A$ by 
$$
\sum^\infty_{i=0}c_i a_i = h\sum^\infty_{i=0}c_i\eta_A (a_i).
$$
Following the equation $hT(h)=h\mu_A$, where $\mu$ is the multiplication of $T$, $(A,h)$ is a totally convex space. 

We will show that $A$ is  separated. Assume that $a_1,a_2\in A$ and $ca_1=ca_2$ for all $0<|c|<1$. Hence
$$
h(c\eta_A(a_1))=ca_1=ca_2=h(c\eta_A(a_2))
$$ 
for all $0<|c|<1$. Since $1=\lim_n c_n$, $0<c_n<1$ and $h$ is continuous, 
$$
a_1=h\eta_A(a_1)=h(\lim_n c_n\eta_A(a_1))=h(\lim_n c_n\eta_A(a_2))=h\eta_A(a_2)=a_2.
$$
 
The unit ball functor makes $\Ban$ a full reflective subcategory of the category of totally convex spaces (see \cite{PR}, 7.7). Let $\sigma_A:A\to UA^\ast$ be the unit of this reflection. Since $A$ is separated, $\sigma_A:A\to UA^\ast$ is an isometry (see \cite{PR1} 11.3). Following \cite{PR}, 7.3, the image of $\sigma_A$ contains the open unit ball of $A^\ast$. 
%Since $\eta_A$ is the unit of the adjuction, there is a unique morphism 
%$g:FA\to A^\ast$ in $\Ban$ such that $U(g)\eta_A=\sigma_A$
%$$ 
%\xymatrix@=3pc{
%A\ar[r]^{\eta_A} \ar[d]_{\sigma_A} &UFA\ar[dl]^{U(g)}\\
%				UA^\ast &}  
%$$

Assume that there is $a\in A^\ast$ which does not belong to this image $\sigma_A(A)$. 
Put $s_n=\sum_{k=1}^n\frac{1}{2^k}$ and $a_n=s_na$. Then $a=\lim_n a_n$ and $a_n\in\sigma_A(A)$. Thus $a_n=\sigma_A(b_n)$ where $b_n\in A$. Since $\frac{a_n}{s_n}=a=\frac{a_m}{s_m}$, we have $a_m=\frac{s_m}{s_n}a_n$. Since 
$\frac{s_m}{s_n}\leq 1$ for $m\leq n$ and $\sigma_A$ is a morphism of totally convex
spaces, we have 
$$
h\eta_A(b_m)=b_m=\frac{s_m}{s_n}b_n=h(\frac{s_m}{s_n}\eta_A(b_n))
$$ 
for $m\leq n$. Since $h$ is nonexpanding, we have
\begin{align*}
d(b_m,b_n)&= d(h\eta_A(b_m),h\eta_A(b_n))=d(h(\frac{s_m}{s_n}\eta_A(b_n)),h\eta_A(b_n))
\leq d(\frac{s_m}{s_n}\eta_A(b_n),\eta_A(b_n))\\
&=\pa\frac{s_n-s_m}{s_n}\eta_A(b_n)\pa\leq\frac{s_n-s_m}{s_n}.
\end{align*}
Hence $b_1,b_2,\dots,b_n,\dots$ is a Cauchy sequence in $A$ and, since $A$ is complete,
it is converging to $b\in A$. Since $a=\sigma_A(b)\in\sigma_A(A)$, we get a contradiction.

%Then $a_1=\frac{1}{2}a=\sigma_A(b_1)$ for $b_1\in A$. Put $b_n=\sum_{k=0}^n\frac{1}{2^n}b_1$ for $n=1,2\dots$. We have 
%$$
%b_n=h(\sum_{k=0}^n\frac{1}{2^n}\eta_A(b_1))
%$$
%for $n=1,2\dots$. Then 
%$$
%\bar{b}=\lim_n \sum_{k=0}^n\frac{1}{2^n}\eta_A(b_1)
%$$  
%is in $UFA$. We have 
%$$
%h(\bar{b})=\sum_{k=0}^\infty \frac{1}{2^n}b_1
%$$ 
%in $A$.

Hence $(A,h)$ is a unit ball of a Banach space.
\end{proof}

\begin{rem}
{
\em
(1) Like in \ref{directed}, $T$ preserves directed colimits.

(2) The \textit{Kantorovich monad} $K$ on $\CMet$ is given by barycentric operations and its algebras are closed convex subsets of Banach spaces (see \cite{FP}). Our monad $T$ has more operations, i.e., less algebras. Following \cite{AMMU}, $K$ preserves directed colimits.

(3) The unit ball functor $\Ban\to\Met$ is monadic as well. Its $T$-algebras are complete being retracts of complete metric spaces - given by $h\eta_A=\id_A$. The rest is the same as in \ref{main}.
}
\end{rem}

\section{$C^\ast$-algebras}
Let  $\CAlg$ be the category of unital $C^\ast$-algebras and $\CCAlg$ the category of commutative unital $C^\ast$-algebras. The forgetful functor $G:\CAlg\to\Ban$ preserves limits, isometries and directed colimits (see \cite{DR} 6.10). Thus it has a left adjoint $F$. The same holds for the restriction $G_c:\CCAlg\to\Ban$ of $G$ on the category $\CCAlg$. The unit $\eta_B:B\to GFB$ is a linear isometry. Thus $F$ is faithful. In the commutative case, the left adjoint $F_c$ was described in \cite{S} and called the Banach-Mazur functor.

Let $\BanAlg$ be the category of unital Banach algebras and $\IBanAlg$ the category of unital involutive Banach algebras. The forgetful functors $G_0:\BanAlg\to\Ban$ and $G_1:\BanAlg\to\Ban$ again preserve limits, isometries and directed colimits. 
\begin{theo}
The forgetful functors $G_0:\BanAlg\to\Ban$ and $G_1:\IBanAlg\to\Ban$ are monadic.
\end{theo}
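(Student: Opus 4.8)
The plan is to verify the hypotheses of Beck's monadicity theorem for each of $G_0$ and $G_1$, exactly as was done for $V^\bullet$ in Theorem~\ref{monad1}. The excerpt has already established the essential ingredients: both forgetful functors preserve limits, isometries, and directed colimits, hence by \cite{AR} 1.66 each has a left adjoint, and each is clearly faithful (the unit is a linear isometry). What remains is to show that $G_0$ and $G_1$ create coequalizers of $G_0$-split (respectively $G_1$-split) pairs.

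\emph{First} I would treat $G_0:\BanAlg\to\Ban$. Suppose $f,g:A\to B$ is a pair in $\BanAlg$ for which $G_0 f, G_0 g$ admits a split coequalizer in $\Ban$, say with coequalizing map $h:G_0 B\to C$ together with sections $s:C\to G_0 B$ and $t:G_0 B\to G_0 A$ satisfying the split-coequalizer identities. The idea is to transport the multiplication of $B$ along $h$: because the splitting data exhibit $C$ as an absolute (hence split) coequalizer, $h$ is a \emph{reflexive coequalizer} that is preserved by every functor, in particular by the tensor-square functor on $\Ban$; this lets me define a bilinear product on $C$ by $h(x)\cdot h(y) = h(G_0 f \text{-equalized product})$, using the formula $m_C(h\otimes h) = h\circ m_B$ forced by the split structure. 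Concretely, one sets $m_C = h\circ m_B\circ(s\otimes s)$ and checks, using the identities $hs=\id$, $hf=hg$ (on the coequalizer), and $g t = s h$, $f t = \id$, that $m_C$ is well-defined, associative, unital, submultiplicative on norms (so that $C$ is a genuine Banach algebra $\bar C$ with $G_0\bar C = C$), and that $h$ lifts to a Banach-algebra morphism $\bar h:B\to\bar C$ which is a coequalizer of $f,g$ in $\BanAlg$. The uniqueness of this lift follows from faithfulness of $G_0$.

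\emph{For} $G_1:\IBanAlg\to\Ban$ the argument is identical, with the single addition that the involution must also be transported: one puts $\ast_C = h\circ \ast_B\circ s$ and verifies that the split identities make it a well-defined conjugate-linear anti-automorphism satisfying the $C^\ast$- or involutive-algebra axioms, so that $\bar C$ lies in $\IBanAlg$. In both cases, once the creation of split coequalizers is in hand, Beck's theorem (\cite{ML}) yields monadicity.

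\emph{The main obstacle} I anticipate is not the formal creation of the coequalizer—that is routine diagram-chasing with the split-coequalizer identities—but rather the verification that the transported product on $C$ is \emph{submultiplicative} with respect to the quotient norm inherited from $\Ban$, i.e. that $\bar C$ is actually complete and has $\pa xy\pa\le\pa x\pa\,\pa y\pa$. The delicate point is that the norm on the coequalizer $C$ in $\Ban$ is a quotient-type norm, and one must confirm that the multiplication defined via $s$ does not expand norms beyond this bound; this is where the nonexpanding character of the splitting maps, together with submultiplicativity in $B$, must be combined carefully. Completeness of $\bar C$ is automatic since $C$ is an object of $\CMet$ and hence of $\Ban$, but the algebra identities and the norm estimate require the split structure in an essential way.
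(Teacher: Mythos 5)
Your proof is essentially correct, but it takes a genuinely different (more hands-on) route than the paper. The paper's proof is a reduction: it observes that multiplication on a unital Banach algebra is a norm-$\leq 1$ map $G_0A\otimes_p G_0A\to G_0A$ for the \emph{projective} tensor product, which satisfies $\pa x\otimes_p y\pa=\pa x\pa\pa y\pa$ (see \cite{BS} 2.5.10), so that $\BanAlg$ is exactly the category of monoids in the symmetric monoidal closed category $(\Ban,\otimes_p)$; monadicity of monoid categories over a monoidal closed base is then quoted from \cite{P}, and $\IBanAlg$ is handled as involutive monoids as in \ref{monad}. What you do instead is unfold, in this special case, precisely the proof of the general theorem the paper cites: you verify Beck's criterion by transporting the structure along a split coequalizer via $m_C=h\circ m_B\circ(s\otimes s)$ (and $\ast_C=h\circ\ast_B\circ s$, $e_C=h\circ e_B$), and the split identities $hs=\id$, $ft=\id$, $gt=sh$, $hf=hg$ indeed give $m_C(h\otimes h)=hm_B$, after which associativity, unitality, the lifting of $h$, and uniqueness of the lifted structure follow because $h\otimes h$ is a split epimorphism. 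Your version is self-contained where the paper's is a two-line appeal to a cited result; the paper's formulation buys brevity and makes clear that nothing about Banach spaces beyond the monoidal closed structure of $(\Ban,\otimes_p)$ is being used. Two corrections, neither fatal. First, your justification that the coequalizer survives tensoring is garbled: split coequalizers are absolute (split $\Rightarrow$ absolute, not the reverse), and reflexivity is irrelevant --- reflexive coequalizers are \emph{not} preserved by arbitrary functors; the correct statement is simply that the functor $X\mapsto X\otimes_p X$, like any functor, carries the split coequalizer diagram to a split coequalizer diagram. Second, the ``main obstacle'' you anticipate --- submultiplicativity of the transported product --- is in fact a non-issue, and recognizing why is exactly the paper's key point: once one works with $\otimes_p$, every map in sight ($h$, $s$, $m_B$, hence $s\otimes_p s$ and the composite $m_C$) is a morphism of $\Ban$, i.e.\ of norm $\leq 1$, and $\pa xy\pa\leq\pa x\pa\pa y\pa$ is literally the statement $\pa m_C\pa\leq 1$ combined with $\pa x\otimes_p y\pa=\pa x\pa\pa y\pa$; no delicate estimate on the quotient norm is needed, and completeness is automatic since $C$ is an object of $\Ban$.
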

\begin{proof}
Let $T_0=G_0F_0$ and $T_1=G_1F_1$ be the induced monads. Given a unital Banach algebra $A$, the operation $\cdot:G_0A\otimes_pG_0A\to G_0A$ has norm $\leq 1$. Here, $\otimes_p$ is the projective tensor product on $\Ban$ which satisfies $\pa x\otimes_p y\pa = \pa x\pa\pa y\pa$ (see \cite{BS} 2.5.10). Hence
$$
\pa x\cdot y\pa\leq\pa x\pa\pa y\pa =\pa x\otimes_p y\pa.
$$
Hence $GA$ is a monoid in $\Ban$. In fact, $\BanAlg$ coincides with the category of monoids in $\Ban$. Since $X\otimes -$ has a right adjoint (see \cite{B} 6.1.9h),
the category of monoids in $\Ban $ is monadic over $\Ban$ (see \cite{P}).

If $A$ is an unital involutive Banach algebra $G_1A$ is also equipped with a unary operation $(-)^\ast$ which is of norm $\leq 1$ again. Hence unital involutive Banach algebras coincide with involutive monoids in $\Ban$. Like in \ref{monad}, $G_1$ is monadic.
\end{proof}

\begin{rem}
{
\em
(1) The same holds for commutative unital (involutive) Banach algebras.

(2) Unital $C^\ast$-algebras are unital involutive Banach algebras satisfying
$$
\pa x^\ast\cdot x\pa = \pa x\pa^2.
$$
They form a full reflective subcategory of $\IBanAlg$ where the reflector is given by the enveloping $C^\ast$-algebra. The unit ball functor $\CCAlg\to\Set$ is monadic (see \cite{PeR}). Similarly, the unit ball functor $\CAlg\to\Set$ is monadic (see \cite{VO}).

(3) Is $G:\CAlg\to \Ban$ monadic? Similarly, is $G_c:\CCAlg\to \Ban$ monadic?
}
\end{rem}

\end{document}